\newtheorem{theo}{Theorem}
\newtheorem{lemma}[theo]{Lemma}
\newtheorem{cor}[theo]{Corollary}
\theoremstyle{remark}
\newtheorem{question}{Question}
\begin{document}

\thispagestyle{plain}

\title{Paths vs. stars in the local profile of trees}

\author{\'Eva Czabarka, L\'aszl\'o A. Sz\'ekely }
\address{\'Eva Czabarka and L\'aszl\'o A. Sz\'ekely\\ Department of Mathematics \\ University of South Carolina \\ Columbia, SC 29208 \\ USA}
\email{\{czabarka,szekely\}@math.sc.edu }
\thanks{The second author was supported in part by the  NSF DMS,  grant number 1300547,  the third author was supported in part by the National
Research Foundation of South Africa, grant number 96236.}
\author{Stephan Wagner}
\address{Stephan Wagner\\ Department of Mathematical Sciences \\ Stellenbosch University \\ Private Bag X1, Matieland 7602 \\ South Africa}
\email{swagner@sun.ac.za}
\subjclass[2010]{05C05}
\keywords{trees, subtrees, local profile, paths, stars}

\date{\today}

\begin{abstract}
The aim of this paper is to provide an affirmative answer to a recent question by Bubeck and Linial on the local profile of trees. For a tree $T$, let $p^{(k)}_1(T)$ be the proportion of paths among all $k$-vertex subtrees (induced connected subgraphs) of $T$, and let $p^{(k)}_2(T)$ be the proportion of stars. Our main theorem states: if $p^{(k)}_1(T_n) \to 0$ for a sequence of trees $T_1,T_2,\ldots$ whose size tends to infinity, then $p^{(k)}_2(T_n) \to 1$. Both are also shown to be equivalent to the statement that the number of $k$-vertex subtrees grows superlinearly and the statement that the $(k-1)$th degree moment grows superlinearly.
\end{abstract}

\maketitle

\section{introduction}

In their recent paper \cite{bubeck2015local}, Bubeck and Linial studied what they call the \emph{local profile} of trees. For two trees $S$ and $T$, we denote the number of copies of $S$ in $T$ 
by $c(S,T)$ (formally, the number of vertex subsets of $T$ that induce a tree isomorphic to $S$). For an integer $k \geq 4$, let $T_1^{k},T_2^k,\ldots$ be a list of all $k$-vertex trees (up to isomorphism), such that $T_1^k = P_k$ is the path and $T_2^k = S_k$ is the star, and set
$$p^{(k)}_i(T) = \frac{c(T_i^k,T)}{Z_k(T)},\quad \text{where}\quad Z_k(T) = \sum_{j} c(T_j^k,T).$$
In words, $Z_k(T)$ is the number of $k$-vertex subtrees of $T$ (the number of $k$-vertex subsets that induce a tree), and $p^{(k)}_i$ the proportion of copies of $T_i^k$ among those subtrees. In particular, $p_1^{(k)}(T)$ is the proportion of paths, and $p_2^{(k)}(T)$ is the proportion of stars. The vector $p^{(k)}(T) = (p^{(k)}_1(T),p^{(k)}_2(T),\ldots)$ is called the $k$-profile of $T$.

Bubeck and Linial study specifically the limit set $\Delta(k)$ of $k$-profiles $p^{(k)}(T)$ as the number of vertices of $T$ tends to infinity. Their main result is that $\Delta(k)$ is convex for every $k$. This contrasts the situation for general graphs, where the analogously defined set is not convex and even determining the convex hull is computationally infeasible \cite{hatami2011undecidability}. Even in special cases, fairly little is known about $k$-profiles (see \cite{huang2014local} for a study of $3$-profiles). We remark that there is also a notable difference in the definitions of $k$-profiles of general graphs and trees: for graphs, the proportion is taken among all vertex sets of cardinality $k$, while for trees it makes more sense to only consider those $k$-vertex sets that actually induce a tree. For general graphs, this would amount to considering only those subsets that induce a connected graph.

Furthermore, Bubeck and Linial show that the sum of the first two components (corresponding to the path and the star respectively) is strictly positive for every point in the limit set $\Delta(k)$ and in fact bounded below by an explicit constant that only depends on $k$ (see the discussion at the end of Section~\ref{sec:main} and in particular Corollary~\ref{cor:lower_bound_mix} for an equivalent statement). They also obtain a somewhat stronger inequality in the special case $k=5$.

Bubeck and Linial list a number of open problems at the end of their article, and one of them will be the main topic of this paper. It can be expressed as follows:

\begin{question}
Let $T_1,T_2,\ldots$ be a sequence of trees such that the number of vertices of $T_n$ tends to infinity as $n \to \infty$. Given that $\lim_{n \to \infty} p_1^{(k)}(T_n) = 0$, is it necessarily true that $\lim_{n \to \infty} p_2^{(k)}(T_n) = 1$?
\end{question}

In somewhat more informal terms, this states the following: if only few of the $k$-vertex subtrees of a large tree are paths, almost all of those subtrees have to be stars. We remark that the statement is not true if $p_1^{(k)}$ and $p_2^{(k)}$ are interchanged. For example, consider the sequence of caterpillars as shown in Figure~\ref{fig:cater}.

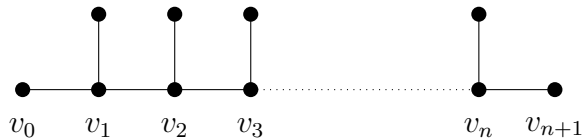
\begin{figure}[htbp]
\begin{center}
\begin{tikzpicture}

        \node[fill=black,circle,inner sep=2pt]  at (0,0) {};
        \node[fill=black,circle,inner sep=2pt]  at (1,0) {};
        \node[fill=black,circle,inner sep=2pt]  at (2,0) {};
        \node[fill=black,circle,inner sep=2pt]  at (3,0) {};
        \node[fill=black,circle,inner sep=2pt]  at (6,0) {};
        \node[fill=black,circle,inner sep=2pt]  at (7,0) {};

        \node[fill=black,circle,inner sep=2pt]  at (1,1) {};
        \node[fill=black,circle,inner sep=2pt]  at (2,1) {};
        \node[fill=black,circle,inner sep=2pt]  at (3,1) {};
        \node[fill=black,circle,inner sep=2pt]  at (6,1) {};

	\draw (0,0)--(3,0);
	\draw [dotted] (3,0)--(6,0);
	\draw (6,0)--(7,0);
	\draw (1,0)--(1,1);                                                                                                                                                                                                                                                                                                                                                                                                                                                                                                                                                                                                                                                                                                                                                                                                                                                                                                                                                                                                                                                                                                                                                                                                                                                                                                                                                                                                                                                                                                                                                                                                                                                                                                                                                                                                                                                                                                                                                                                                                                                                                                                                                      
	\draw (2,0)--(2,1);                                                                                                                                                                                                                                                                                                                                                                                                                                                                                                                                                                                                                                                                                                                                                                                                                                                                                                                                                                                                                                                                                                                                                                                                                                                                                                                                                                                                                                                                                                                                                                                                                                                                                                                                                                                                                                                                                                                                                                                                                                                                                                                                                      
	\draw (3,0)--(3,1);                                                                                                                                                                                                                                                                                                                                                                                                                                                                                                                                                                                                                                                                                                                                                                                                                                                                                                                                                                                                                                                                                                                                                                                                                                                                                                                                                                                                                                                                                                                                                                                                                                                                                                                                                                                                                                                                                                                                                                                                                                                                                                                                                      
	\draw (6,0)--(6,1);                                                                                                                                                                                                                                                                                                                                                                                                                                                                                                                                                                                                                                                                                                                                                                                                                                                                                                                                                                                                                                                                                                                                                                                                                                                                                                                                                                                                                                                                                                                                                                                                                                                                                                                                                                                                                                                                                                                                                                                                                                                                                                                                                      

	\node at (0,-0.5) {$v_0$};
	\node at (1,-0.5) {$v_1$};
	\node at (2,-0.5) {$v_2$};
	\node at (3,-0.5) {$v_3$};
	\node at (6,-0.5) {$v_n$};
	\node at (7,-0.5) {$v_{n+1}$};
\end{tikzpicture}
\end{center}
\caption{A caterpillar.}\label{fig:cater}
\end{figure}

Obviously, $p_2^{(5)}(T_n) = 0$ for every $n$ in this example: the maximum degree is $3$, so $T_n$ does not contain any $5$-vertex stars. On the other hand, simple calculations show that $\lim_{n \to \infty} p_1^{(5)}(T_n) = \frac12$.

In the following, we will provide an affirmative answer to the question raised by Bubeck and Linial, and even prove a slight extension involving the total number of $k$-vertex subtrees and the degree moments. Here and in the following, we write $V(T)$ and $E(T)$ for the vertex set and edge set of a tree $T$, $|T|$ is the number of vertices of $T$, and $d(v)$ denotes the degree of a vertex $v$; whenever we speak about the degree of a vertex, we always mean the degree in the underlying tree $T$, not a subtree.

\begin{theo}\label{thm:main}
Let $T_1,T_2,\ldots$ be a sequence of trees such that $|T_n| \to \infty$ as $n \to \infty$. For every $k \geq 4$, the following four statements are equivalent:
\begin{enumerate}
\item[(M1)] $\displaystyle \lim_{n \to \infty} p_1^{(k)}(T_n) = 0$,
\item[(M2)] $\displaystyle \lim_{n \to \infty} \frac1{|T_n|} Z_k(T_n) = \infty$,
\item[(M3)] $\displaystyle \lim_{n \to \infty} \frac1{|T_n|} \sum_{v \in V(T_n)} d(v)^{k-1} = \infty$,
\item[(M4)] $\displaystyle \lim_{n \to \infty} p_2^{(k)}(T_n) = 1$.
\end{enumerate}
\end{theo}
Informally, statement (M2) says that $T_n$ contains more than linearly many $k$-vertex subtrees. (M3) states that the $(k-1)$-th degree moment tends to infinity. The implication (M4) $\Rightarrow$ (M1) is trivial, so our main task will be to prove the implications (M1) $\Rightarrow$ (M2) $\Leftrightarrow$ (M3) $\Rightarrow$ (M4).

Shortly after a first version of this paper was published online, the equivalence of (M1) and (M4) was shown independently by Bubeck, Edwards, Mania and Supko \cite{bubeck2016paths}, who also provided an explicit (nonlinear) inequality between $p_1^{(k)}(T)$ and $p_2^{(k)}(T)$ that implies the equivalence.

\section{Proof of the main theorem}\label{sec:main}

Theorem~\ref{thm:main} will follow from a sequence of lemmas.  As a first step, we estimate the total number of $k$-vertex subtrees. 

\begin{lemma}\label{lem:simple_bound}
Let $k$ be a positive integer. The total number of $k$-vertex subtrees of any tree $T$ can be bounded above as follows:
$$Z_k(T) \leq (k-1)!\sum_{v \in V(T)} d(v)^{k-1}.$$
\end{lemma}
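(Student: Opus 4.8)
The idea is to count $k$-vertex subtrees by a rooted-walk encoding. First I would fix a $k$-vertex subtree $S$ of $T$ and think of it as an abstract tree on $k$ vertices. Any tree on $k$ vertices has a depth-first (or breadth-first) traversal that visits all $k$ vertices; more convenient here is to observe that a tree on $k$ vertices can be described by choosing a root and then listing the vertices in the order of such a traversal, where each new vertex is attached to a previously listed one. The plan is therefore to injectively encode each $k$-vertex subtree $S$ (together with an auxiliary choice, e.g. a root) by a sequence of $k-1$ "steps," each step moving from an already-reached vertex of $T$ to one of its neighbors. Since from a vertex $v$ there are $d(v)$ neighbors to move to, the number of such step-sequences of length $k-1$ that start somewhere and stay within the tree is at most $\sum_{v} d(v) \cdot (\text{something})$; iterating, one gets a bound of the shape $\sum_{v\in V(T)} d(v)^{k-1}$ up to a combinatorial factor accounting for the orderings.

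More concretely, here is the encoding I would use. Root $S$ at an arbitrary vertex $r$ and perform a traversal that produces an ordering $v_1 = r, v_2, \dots, v_k$ of $V(S)$ such that each $v_i$ for $i \ge 2$ is adjacent in $S$ (hence in $T$) to some $v_{j}$ with $j < i$. For each $i \ge 2$ record the pair (index $j(i)$ of the parent in the list, which neighbor of $v_{j(i)}$ in $T$ the vertex $v_i$ is). The first coordinate ranges over at most $(k-1)!$ possibilities in total over all $i$ (it is bounded by the number of such "attachment patterns," which is a function of $k$ alone, and one checks this function is at most $(k-1)!$), and given the attachment pattern and the starting vertex $v_1$, the second coordinates determine $S$ uniquely: walking through the list, $v_i$ is forced once we know which neighbor of the already-determined $v_{j(i)}$ it is. Summing over the starting vertex $v_1 = v$ and the at most $d(v)$ choices for each of the $k-1$ neighbor-selections gives $\sum_v d(v)^{k-1}$ choices for the second coordinates, and the first coordinate contributes the factor $(k-1)!$; since every $k$-vertex subtree arises (for at least one choice of root and traversal, in fact), this yields $Z_k(T) \le (k-1)! \sum_{v\in V(T)} d(v)^{k-1}$.

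The main obstacle is pinning down the combinatorial factor cleanly: one must make sure the map from subtrees to (root, attachment pattern, neighbor-sequence) triples is well-defined and that the number of attachment patterns is genuinely at most $(k-1)!$ rather than merely $O_k(1)$. I would handle this by using a canonical traversal — for instance, always extend the current partial tree by the neighbor-move that is "available" and following a fixed priority — so that the attachment pattern (the sequence $j(2), \dots, j(k)$) is constrained: $j(i) \le i-1$ always, giving at most $1 \cdot 2 \cdots (k-1) = (k-1)!$ patterns crudely, which is exactly the claimed factor. The one subtlety is that different roots of the same $S$ could a priori give different triples, but that only helps the inequality (it means each $S$ is counted at least once on the right-hand side), so injectivity is only needed in the direction (triple) $\mapsto$ (at most one $S$), which the walk-reconstruction argument gives directly.
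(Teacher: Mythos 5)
There is a genuine gap in your argument, and it is precisely the step that the paper's proof is built around. You claim: ``Summing over the starting vertex $v_1 = v$ and the at most $d(v)$ choices for each of the $k-1$ neighbor-selections gives $\sum_v d(v)^{k-1}$.'' But the $i$-th neighbor-selection chooses a $T$-neighbor of $v_{j(i)}$, not of $v_1=v$, so the number of options at that step is $d(v_{j(i)})$, which can be far larger than $d(v)$ when the root is chosen arbitrarily. A concrete failure: in the star $K_{1,n}$, root the subtree at a leaf $v_1$ (so $d(v_1)=1$). After the forced step $v_1\to$ center, every remaining vertex is a neighbor of the center, giving $\Theta(n^{k-2})$ encodings anchored at $v_1$ --- nowhere near $d(v_1)^{k-1}=1$. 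So the inequality you use per root simply does not hold.

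The missing idea is to anchor not at an arbitrary root, but at a vertex $v$ of $S$ that has \emph{maximum $T$-degree among the vertices of $S$}; this is exactly what the paper does. Every $k$-vertex subtree then gets assigned to (at least) one such $v$, and for subtrees assigned to $v$, \emph{every} vertex encountered during the build-up has $T$-degree at most $d(v)$. Hence each of the $k-1$ leaf-additions really does have at most $d(v)$ neighbor choices, and combined with the at most $1\cdot 2\cdots (k-1)=(k-1)!$ choices of attachment site you already identified, one gets the bound $(k-1)!\,d(v)^{k-1}$ per anchor $v$ and $(k-1)!\sum_v d(v)^{k-1}$ overall. Once you add the max-degree anchoring, your encoding argument and the paper's proof are the same.
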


\begin{proof}
For every vertex $v$ of $T$, we count the number of $k$-vertex subtrees with the property that $v$ is contained, and that it has maximum degree (in $T$, not the subtree!) among all vertices of the subtree. Every such subtree can be constructed by repeatedly adding a leaf, starting with the single vertex $v$. At the $j$-th such step, there are at most $j$ vertices to attach a leaf to, and at most $d(v)$ choices for the new leaf (since $v$ was assumed to have maximum degree). Therefore, there are at most $(k-1)! \cdot d(v)^{k-1}$ possible subtrees of this kind for any fixed vertex $v$. Summing over all $v$, we obtain the desired result. Clearly every subtree is counted at least once in the sum---possibly even several times, but since we are only interested in an upper bound, this is immaterial.
\end{proof}

\begin{lemma}\label{lem:stars}
For every integer $k \geq 3$, the total number of $k$-vertex stars contained in a tree $T$ is
$$c(S_k,T) = \sum_{v \in V(T)} \binom{d(v)}{k-1}.$$
\end{lemma}

\begin{proof}
The number of $k$-vertex stars contained in $T$ whose center is $v$ is given by $\binom{d(v)}{k-1}$, the number of ways to choose $k-1$ of its neighbors. The desired statement follows immediately.
\end{proof}

Note that $p_1^{(k)}(T_n) = 0$ if the diameter of $T_n$ is at most $k-2$ (in this case, there are certainly no induced $k$-vertex paths), so this would provide us with a simple construction for which condition (M1) holds. We will treat this case separately and show that it implies (M2):

\begin{lemma}\label{lem:bounded_diam}
Fix an integer $k \geq 3$, and let $T_1,T_2,\ldots$ be a sequence of trees whose diameter is bounded above by some fixed constant $D$. If $|T_n| \to \infty$ as $n \to \infty$, then (M2) holds, i.e.
$$\lim_{n \to \infty} \frac{1}{|T_n|} Z_k(T_n) = \infty.$$
\end{lemma}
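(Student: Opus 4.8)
The plan is to trade the count $Z_k(T_n)$ first for the number of $k$-vertex stars, which Lemma~\ref{lem:stars} computes exactly, and then for the $(k-1)$st degree moment. Since $Z_k(T)\ge c(S_k,T)=\sum_{v}\binom{d(v)}{k-1}$ and $\binom{d}{k-1}\ge\bigl(\frac{d}{k-1}\bigr)^{k-1}$ whenever $d\ge k-1$ (while vertices of degree $<k-1$ contribute at most $(k-2)^{k-1}|T_n|$ to $\sum_v d(v)^{k-1}$), it suffices to show that $\frac1{|T_n|}\sum_{v\in V(T_n)}d(v)^{k-1}\to\infty$; this immediately yields (M2).

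To prove this, I would root $T_n$ at a center vertex, so that all vertices lie at depth at most $R:=\lceil D/2\rceil$. Let $m_i$ be the number of vertices at depth $i$, so $m_0=1$, $\sum_i m_i=|T_n|$, and $m_i\ge1$ for every $i$ up to the radius $R'\le R$. The structural point is that the degrees of the vertices at a fixed depth $i\ge1$ sum to exactly $m_i+m_{i+1}$ (one edge up per vertex, $m_{i+1}$ edges down altogether), while the center has degree $m_1$; applying Jensen's inequality to the convex function $x\mapsto x^{k-1}$ on each level gives
$$\sum_{v\in V(T_n)}d(v)^{k-1}\ \ge\ \sum_{i=0}^{R'-1}\frac{m_{i+1}^{\,k-1}}{m_i^{\,k-2}},$$
where $m_0=1$ absorbs the level-$0$ term. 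I now choose a depth $j$ maximizing $m_j$, so that $m_j\ge|T_n|/(R+1)\to\infty$, and discard all terms with index $\ge j$.

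The crux is then a purely arithmetic inequality: for every integer $j\ge1$ and all positive reals $m_0=1,m_1,\dots,m_j$,
$$\sum_{i=0}^{j-1}\frac{m_{i+1}^{\,k-1}}{m_i^{\,k-2}}\ \ge\ c_{j,k}\,m_j^{\alpha_{j,k}},$$
with a constant $c_{j,k}>0$ and an exponent $\alpha_{j,k}>1$ depending only on $j,k$. I would prove this by induction on $j$. For $j=1$ it is the identity $m_1^{k-1}=m_1^{k-1}$, with $\alpha_{1,k}=k-1>1$. For the inductive step, rescaling $m_i\mapsto m_i/m_1$ in the tail $\sum_{i=1}^{j-1}$ and invoking the induction hypothesis reduces matters to bounding $m_1^{k-1}+c_{j-1,k}\,m_1^{\,1-\alpha_{j-1,k}}m_j^{\alpha_{j-1,k}}$ from below over $m_1>0$; weighted AM--GM does this and produces $\alpha_{j,k}=\frac{(k-1)\alpha_{j-1,k}}{k-2+\alpha_{j-1,k}}$, and one checks in one line that $\alpha_{j-1,k}>1\Rightarrow\alpha_{j,k}>1$ (here $k\ge3$ is used, through $k-2>0$). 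Since $j\le R$ and the $\alpha_{j,k}$ are decreasing in $j$, they are all at least $\alpha_{R,k}>1$; feeding $m_j\ge|T_n|/(R+1)$ back in gives $\sum_v d(v)^{k-1}\ge c\,\bigl(|T_n|/(R+1)\bigr)^{\alpha_{R,k}}$, which is superlinear, as required.

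Only the arithmetic inequality is non-routine; everything else is bookkeeping. It is what replaces the naive (and insufficient) attempt to use a single high-degree vertex: bounded diameter forces some degree $\gtrsim|T_n|^{1/R}$, but $\bigl(|T_n|^{1/R}\bigr)^{k-1}$ need not beat $|T_n|$ once $R\ge k-1$, so one really must exploit that the branching, although possibly spread over up to $R$ levels, remains concentrated enough that the $(k-1)$st degree moment still grows superlinearly. Any exponent $\alpha>1$ suffices for (M2), so there is no need to pin down the optimal one.
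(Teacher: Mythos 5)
Your proof is correct, and it takes a route that is closely related to, but noticeably cleaner than, the paper's. The paper proves the stronger fact that $c(S_k,T_n)/|T_n|\to\infty$ directly, by induction on the height of a rooted tree: its claim is $c(S_k,T)\ge\alpha_D\max(|T|-N_D,0)^{\beta_D}$ with $\beta_D>1$, and the induction step applies Jensen to the function $x\mapsto\alpha_{D-1}\max(x-N_{D-1},0)^{\beta_{D-1}}$ over the sizes of the root branches, then balances the root-degree contribution against the branch contribution by a case split at $r\gtrless|T|^{2/3}$. You instead pass to the $(k-1)$st degree moment (essentially invoking the easy direction of the (M2)$\Leftrightarrow$(M3) equivalence up front), slice by depth, apply Jensen to $x\mapsto x^{k-1}$ on each level, and then isolate a self-contained arithmetic inequality $\sum_{i=0}^{j-1}m_{i+1}^{k-1}/m_i^{k-2}\ge c_{j,k}\,m_j^{\alpha_{j,k}}$ with $\alpha_{j,k}>1$, proved by induction on $j$ with weighted AM--GM replacing the paper's $|T|^{2/3}$ cutoff. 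The shared DNA is obvious: both are inductions on depth with one Jensen per step and a balancing step, both produce superlinear exponents with a recursion that keeps them strictly above $1$ as long as $k\ge3$. What your version buys is a cleaner separation of the combinatorics (the level identity $\sum_{v\text{ at depth }i}d(v)=m_i+m_{i+1}$) from the analysis (the numerical lemma), an explicit exponent recursion $\alpha_{j,k}=(k-1)\alpha_{j-1,k}/(k-2+\alpha_{j-1,k})$ with a one-line check that it stays $>1$, and no need to shift by $N_D$ or take a $\max(\cdot,0)$. What the paper's version buys is a direct statement about $c(S_k,T)$ (rather than the degree moment) and avoidance of any mention of levels — its recursion is on rooted subtree decompositions, which is perhaps more natural if one has not yet proved Lemma~\ref{lem:m2_m3}. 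Both are fine; yours is arguably the more streamlined write-up.

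One very small presentational point: when you ``choose a depth $j$ maximizing $m_j$,'' you should note that $j\ge1$ for all large $n$ (otherwise $|T_n|\le R+1$), so that the sum $\sum_{i=0}^{j-1}$ is nonempty and the arithmetic lemma applies; and the constant in the final bound should be $\min_{1\le j\le R}c_{j,k}$ rather than a single $c_{j,k}$, since $j$ may vary with $n$. Neither of these affects the validity of the argument.
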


\begin{proof}
We prove the slightly stronger statement that
$$\lim_{n \to \infty} \frac{1}{|T_n|} c(S_k,T_n) = \infty,$$
i.e.~the number of induced $k$-vertex stars grows faster than linearly. To this end, it will be useful to consider all trees as rooted (at an arbitrary vertex). Clearly, if the diameter is bounded by $D$, the height of any rooted version is also bounded by $D$. We prove the following by induction on $D$, from which the statement of the lemma follows immediately:

{\bf Claim.} For every positive integer $D$, there exist positive constants $\alpha_{D}$, $\beta_{D}$ with $\beta_{D} > 1$ and a positive integer $N_D$ depending only on $D$ and $k$ such that
$$c(S_k,T) \geq \alpha_{D}\max(|T|-N_D,0)^{\beta_D}$$
for any rooted tree $T$ whose height is at most $D$.

 First note that the claim is trivial for $D=1$: there is only one possible rooted tree in this case, namely a star. Thus we have
$$c(S_k,T) = \binom{|T|-1}{k-1} \geq \Big( \frac{|T|}{k} \Big)^{k-1}$$
in this case as soon as $|T| \geq k$, which gives us the desired inequality with $\beta_1 = k-1 > 1$, $\alpha_1 = k^{-(k-1)}$ and $N_1 = k$.

Now we turn to the induction step. Let $r$ be the root degree, and let $T_1,T_2,\ldots,T_r$ be the root branches, each endowed with the natural root (the neighbor of $T$'s root). The number of copies of $S_k$ in $T$ for which the root is the centre is given by $\binom{r}{k-1}$, so
$$c(S_k,T) \geq \binom{r}{k-1} + \sum_{j=1}^r c(S_k,T_j).$$
Each of the branches has height at most $D-1$, so we can apply the induction hypothesis to them. In addition, we note that $f(x) = \alpha_{D-1}\max(x-N_{D-1},0)^{\beta_{D-1}}$ is a convex function, so Jensen's inequality gives us
$$c(S_k,T) \geq \binom{r}{k-1} + r \alpha_{D-1} \max \Big( \frac{|T|-1}{r}-N_{D-1},0\Big)^{\beta_{D-1}}.$$
If $r \geq |T|^{2/3}$ and $|T| \geq (k-1)^{3/2}$, then the first term is 
$$\binom{r}{k-1} \geq \binom{|T|^{2/3}}{k-1} \geq \Big( \frac{|T|^{2/3}}{k-1} \Big)^{k-1}.$$
If, on the other hand, $r < |T|^{2/3}$ and $|T| \geq (N_{D-1}+2)^3$, then the second term is
\begin{align*}
r \alpha_{D-1} \max \Big( \frac{|T|-1}{r}-N_{D-1},0\Big)^{\beta_{D-1}} &\geq r \alpha_{D-1} \Big( \frac{|T|}{r}-N_{D-1}-1 \Big)^{\beta_{D-1}} \\ 
&\geq r \alpha_{D-1} \Big( \frac{|T|}{r(N_{D-1}+2)} \Big)^{\beta_{D-1}} \\
&= \frac{\alpha_{D-1}}{(N_{D-1}+2)^{\beta_{D-1}}} \cdot r^{1-\beta_{D-1}}|T|^{\beta_{D-1}} \\
&\geq \frac{\alpha_{D-1}}{(N_{D-1}+2)^{\beta_{D-1}}} \cdot |T|^{(\beta_{D-1}+2)/3}.
\end{align*}
Thus we obtain the desired inequality with
\begin{align*}
\alpha_{D} &= \min \Big(\frac{1}{(k-1)^{k-1}}, \frac{\alpha_{D-1}}{(N_{D-1}+2)^{\beta_{D-1}}} \Big), \\
\beta_{D} &= \min \Big(\frac{2(k-1)}{3}, \frac{\beta_{D-1}+2}{3} \Big), \\
N_D &= \max \Big( (k-1)^{3/2}, (N_{D-1}+2)^3 \Big).
\end{align*}
Since $k \geq 3$ and we were assuming $\beta_{D-1} > 1$, we also have $\beta_{D} > 1$. This completes the induction and thus the proof of the lemma.
\end{proof}

Lemma~\ref{lem:bounded_diam} shows that (M2) always holds for sequences of trees with bounded diameter, even without the assumption (M1). On the other hand, if the diameter is sufficiently large, then it turns out that there must always be at least linearly many paths of length $k$. In fact, we have the following simple lemma:

\begin{lemma}\label{lem:many_paths}
Let $k$ be a positive integer. If the diameter of a tree $T$ is at least $2k-2$, then $c(P_k,T) \geq |T|/2$. 
\end{lemma}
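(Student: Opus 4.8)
The plan is to exploit a long path directly. Since the diameter of $T$ is at least $2k-2$, there exists a path $v_0 v_1 \cdots v_{2k-2}$ in $T$ with $2k-1$ vertices. The key observation is that any $k$ consecutive vertices along this path induce a $k$-vertex path in $T$ (induced, since the path is geodesic in the tree, so there can be no chords). This already gives us a handful of induced $k$-paths, but not yet linearly many; the point is that we must \emph{move} this long path around the tree.

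First I would set up the following counting scheme: for each vertex $w \in V(T)$, I want to associate to $w$ an induced $k$-vertex path $P(w)$ containing $w$, in such a way that no path is used for more than a bounded number of vertices $w$ (in fact, we only need that each path is used at most twice, which is where the factor $|T|/2$ comes from). To build $P(w)$, consider the unique path in $T$ from $w$ to the fixed geodesic $v_0 \cdots v_{2k-2}$; say it meets the geodesic at $v_i$. Walk from $w$ toward $v_i$ and then continue along the geodesic in whichever direction still has at least $k-1$ further vertices available (at least one of the two directions does, since the geodesic has $2k-1$ vertices and $v_i$ is at position $i$ with $0 \le i \le 2k-2$). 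Taking the first $k$ vertices of this walk starting from $w$ gives a path on $k$ vertices; I would argue it is induced because it is a concatenation of two geodesic segments meeting at a single vertex in a tree, hence itself a geodesic.

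The main obstacle is the double-counting bound: a given induced $k$-path $Q$ could a priori be produced as $P(w)$ for many different $w$. I would handle this by noting that $w$ must be an \emph{endpoint} of $Q$ — indeed, $w$ is the start of the walk that defines $P(w)$ — and a path has only two endpoints, so each $Q$ is hit by at most two vertices $w$. Hence
$$|T| = \sum_{w \in V(T)} 1 \le \sum_{w \in V(T)} 1 \le 2\, c(P_k, T),$$
where the middle sum is reorganized by grouping the $w$'s according to which path $P(w)$ equals; this yields $c(P_k,T) \ge |T|/2$. A small amount of care is needed to confirm that $P(w)$ always has exactly $k$ vertices — this is where the bound $2k-2$ on the diameter (equivalently $2k-1$ vertices on the geodesic) is used, guaranteeing that after reaching $v_i$ at least one direction offers enough room, and that together with the initial segment from $w$ to $v_i$ there are at least $k$ vertices total. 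The rest is bookkeeping.
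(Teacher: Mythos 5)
Your proof is correct, and it is essentially the same argument as the paper's: you associate to each vertex $w$ a $k$-vertex induced path with $w$ as an endpoint, and then observe that each $k$-vertex path has only two endpoints, giving the factor of $1/2$. The only difference is cosmetic: the paper invokes radius $\geq$ diameter$/2 \geq k-1$ to conclude immediately that every vertex has eccentricity at least $k-1$ and hence is the endpoint of some $k$-vertex path, whereas you re-derive this by explicitly walking from $w$ to a fixed diametral geodesic and then along it; your walk-to-the-geodesic construction is exactly a proof that the eccentricity of every vertex is at least $k-1$. (One small glitch in your write-up: the displayed chain $|T| = \sum_w 1 \leq \sum_w 1 \leq 2\,c(P_k,T)$ repeats the same sum; what you mean is to group the $w$'s by the path $P(w)$ and use that each group has size at most two.)
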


\begin{proof}
Since the diameter is assumed to be at least $2k-2$, the radius must be at least $k-1$. Therefore, for every vertex $v$ of $T$, there is a $k$-vertex path in $T$ starting at $v$. Since every path has only two ends, no path is counted more than twice in this argument, thus there must be at least $|T|/2$ $k$-vertex paths occurring in $T$.
\end{proof}

\begin{cor}\label{cor_m1m2}
For every integer $k \geq 4$, the implication (M1) $\Rightarrow$ (M2) holds.
\end{cor}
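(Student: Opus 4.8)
The plan is to deduce Corollary~\ref{cor_m1m2} by combining Lemma~\ref{lem:bounded_diam} and Lemma~\ref{lem:many_paths} through a simple case distinction on the diameters of the trees in the sequence. Assume (M1) holds, i.e.\ $p_1^{(k)}(T_n) \to 0$. The key observation is that Lemma~\ref{lem:many_paths} forces any tree of large diameter to have at least linearly many $k$-vertex paths; if infinitely many $T_n$ had diameter at least $2k-2$, then along that subsequence $c(P_k,T_n) \geq |T_n|/2$. Combined with (M1), this would force $Z_k(T_n) \gg |T_n|$ along that subsequence, so (M2) would already hold there. On the complementary subsequence the diameters are bounded, and Lemma~\ref{lem:bounded_diam} applies directly to give (M2). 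The mild subtlety is that one wants (M2) to hold for the \emph{whole} sequence, not just a subsequence, so the argument should be phrased contrapositively or via an arbitrary subsequence.

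Concretely, I would argue as follows. Suppose (M2) fails; then there is an $\varepsilon > 0$ (really: a finite bound $C$) and a subsequence $(T_{n_j})$ with $Z_k(T_{n_j}) \leq C\,|T_{n_j}|$ for all $j$, while still $|T_{n_j}| \to \infty$. Pass to a further subsequence so that either all $T_{n_j}$ have diameter $\geq 2k-2$, or all have diameter $< 2k-2$. In the first case Lemma~\ref{lem:many_paths} gives $c(P_k,T_{n_j}) \geq |T_{n_j}|/2$, whence
$$
p_1^{(k)}(T_{n_j}) = \frac{c(P_k,T_{n_j})}{Z_k(T_{n_j})} \geq \frac{|T_{n_j}|/2}{C\,|T_{n_j}|} = \frac{1}{2C} > 0,
$$
contradicting (M1). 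In the second case the diameters are bounded by the constant $D = 2k-3$, so Lemma~\ref{lem:bounded_diam} applies and yields $\frac{1}{|T_{n_j}|}Z_k(T_{n_j}) \to \infty$, directly contradicting $Z_k(T_{n_j}) \leq C\,|T_{n_j}|$. Either way we reach a contradiction, so (M2) must hold.

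There is essentially no serious obstacle here: both lemmas are already in hand, and the only thing to be careful about is the logical bookkeeping of the case split (handling it on a subsequence rather than splitting the sequence in an ad hoc way, and noting that Lemma~\ref{lem:bounded_diam} needs the bounded-diameter hypothesis to hold along the whole subsequence under consideration, which the passage to a sub-subsequence guarantees). One should also note in passing that $k \geq 4$ (rather than just $k \geq 3$) is used only to stay consistent with the statement of Theorem~\ref{thm:main} and to ensure $P_k$ and $S_k$ are genuinely distinct trees; the diameter threshold $2k-2$ from Lemma~\ref{lem:many_paths} and the bounded-diameter input of Lemma~\ref{lem:bounded_diam} are the actual load-bearing facts.
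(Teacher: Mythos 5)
Your proof is correct and follows essentially the same route as the paper: split the sequence by whether the diameter is below or at least $2k-2$, apply Lemma~\ref{lem:bounded_diam} to the bounded-diameter part and Lemma~\ref{lem:many_paths} together with (M1) to the rest. The paper phrases it directly on the two subsequences rather than contrapositively, but the content is identical.
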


\begin{proof}
Consider a sequence $T_1,T_2,\ldots$ of trees with $|T_n| \to \infty$ for which (M1) holds. For the subsequence consisting of trees whose diameter is at most $2k-3$, (M2) follows from Lemma~\ref{lem:bounded_diam}, regardless of whether (M1) is true or not. For the remaining subsequence, we can simply combine  Lemma~\ref{lem:many_paths} with the assumption (M1).
\end{proof}

As a next step, we show the equivalence of (M2) and (M3), which is quite straightforward:

\begin{lemma}\label{lem:m2_m3}
For every integer $k \geq 3$, the two statements (M2) and (M3) are equivalent.
\end{lemma}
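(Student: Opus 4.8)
The equivalence splits cleanly into two one-line implications sandwiching the lemmas already proved.

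\emph{(M2) $\Rightarrow$ (M3).} This is immediate from Lemma~\ref{lem:simple_bound}: dividing the inequality $Z_k(T) \leq (k-1)!\sum_{v} d(v)^{k-1}$ by $|T|$ gives
$$\frac{1}{|T|}\sum_{v \in V(T)} d(v)^{k-1} \geq \frac{1}{(k-1)!}\cdot\frac{Z_k(T)}{|T|},$$
so if the right-hand side tends to infinity (along the sequence $T_n$), so does the left-hand side.

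\emph{(M3) $\Rightarrow$ (M2).} For the reverse direction I would bound $Z_k(T)$ from below by the number of stars, $Z_k(T) \geq c(S_k,T) = \sum_{v} \binom{d(v)}{k-1}$ (Lemma~\ref{lem:stars}), and then compare $\binom{d}{k-1}$ with $d^{k-1}$. The point is that these two quantities are of the same order only once $d$ is bounded below: writing $\binom{d}{k-1} = \frac{d(d-1)\cdots(d-k+2)}{(k-1)!}$, every factor satisfies $d-i \geq d/2$ whenever $d \geq 2(k-1)$, hence
$$\binom{d}{k-1} \geq \frac{1}{2^{k-1}(k-1)!}\, d^{k-1}\qquad\text{for all } d \geq 2(k-1).$$
Set $C = 2(k-1)$. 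The vertices of degree at most $C$ contribute at most $C^{k-1}|T|$ to $\sum_v d(v)^{k-1}$, i.e.~only linearly; therefore the assumption (M3) forces $\frac{1}{|T_n|}\sum_{v : d(v) > C} d(v)^{k-1} \to \infty$ as well. Combining this with the displayed inequality,
$$\frac{Z_k(T_n)}{|T_n|} \geq \frac{c(S_k,T_n)}{|T_n|} \geq \frac{1}{2^{k-1}(k-1)!}\cdot\frac{1}{|T_n|}\sum_{v : d(v) > C} d(v)^{k-1} \longrightarrow \infty,$$
which is (M2).

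\textbf{Main obstacle.} There is no serious obstacle here; the only subtlety is that $\binom{d}{k-1}$ and $d^{k-1}$ differ by more than a constant factor for small $d$ (indeed $\binom{d}{k-1}=0$ for $d<k-1$), so one cannot compare the two sums termwise. The fix is exactly the degree-threshold split above: bounded-degree vertices contribute only $O(|T|)$ to the $(k-1)$th degree moment and can be discarded, after which the termwise comparison is valid. I would state the threshold $C=2(k-1)$ explicitly and keep track of the constant $2^{-(k-1)}/(k-1)!$, since it is harmless for the limit statement.
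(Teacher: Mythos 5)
Your proof is correct and follows essentially the same route as the paper: (M2) $\Rightarrow$ (M3) via Lemma~\ref{lem:simple_bound}, and (M3) $\Rightarrow$ (M2) via Lemma~\ref{lem:stars} together with a comparison of $\binom{d}{k-1}$ and $d^{k-1}$ after discarding the linear contribution of low-degree vertices. The paper uses the cleaner estimate $\binom{d}{k-1} \geq (d/(k-1))^{k-1}$ for $d \geq k-1$ and absorbs small degrees into a single $-|T_n|$ term, whereas you use a threshold $2(k-1)$ and a $2^{-(k-1)}/(k-1)!$ constant, but this is a cosmetic difference only.
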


\begin{proof}
Condition (M2), combined with Lemma~\ref{lem:simple_bound}, implies that
$$\lim_{n \to \infty} \frac{1}{|T_n|} \sum_{v \in V(T_n)} d(v)^{k-1}  = \infty,$$
which is exactly (M3). On the other hand, since $\binom{d}{k-1} \geq \big( \frac{d}{k-1} \big)^{k-1}$ for $d \geq k-1$, Lemma~\ref{lem:stars} gives
\begin{equation}\label{eq:m2_and_m3}
c(S_k,T_n) \geq (k-1)^{-(k-1)} \sum_{v \in V(T_n)} d(v)^{k-1}  - |T_n|,
\end{equation}
where the final term stems from vertices whose degree is less than $k-1$. Therefore, if (M3) holds, then we also have
$$
\lim_{n \to \infty} \frac{c(S_k,T_n)}{|T_n|} = \infty,
$$
which is (M2).
\end{proof}

Now we would like to bound the number of non-star $k$-vertex subtrees from above to obtain the implication (M2) $\Rightarrow$ (M4). To this end, we first introduce the notion of edge weights:

Define the \emph{weight} of an edge $e = vu$ as
$$\omega(e) = \max \Big( \frac{d(u)}{d(v)}, \frac{d(v)}{d(u)} \Big).$$
In words: take the degrees of the two endpoints of $e$ and divide the higher degree by the lower degree. For some real number $a > 1$, call a subtree $S$ of a tree $T$ an $a$-\emph{unbalanced} subtree if it contains at least one edge that is not a pendant edge (incident to a leaf) of $S$ and that has a weight of at least $a$ in $T$. Denote the total number of $a$-unbalanced $k$-vertex subtrees of $T$ by $Z_k(T,a)$. The following lemma is in some sense a refinement of Lemma~\ref{lem:simple_bound}. 

\begin{lemma}\label{lem:unbalanced}
For every integer $k \geq 4$, every real number $a > 1$, and every tree $T$, we have
$$Z_k(T,a) \leq \frac{(k-1)!}{a} \sum_{v \in V(T)} d(v)^{k-1}.$$
\end{lemma}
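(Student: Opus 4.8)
The plan is to adapt the counting argument from the proof of Lemma~\ref{lem:simple_bound}, but to take advantage of the unbalanced edge to save a factor of $a$. Recall that in Lemma~\ref{lem:simple_bound} we counted, for each vertex $v$, the $k$-vertex subtrees in which $v$ has maximum degree, building the subtree by repeatedly attaching leaves; each such step contributes a factor of at most $d(v)$ (choices for the new vertex) times at most $j$ (choices for the vertex to attach to). Here I would do the same, but fix attention on an $a$-unbalanced subtree $S$ and a witnessing edge $e = xy$ which is not a pendant edge of $S$ and has $\omega(e) \geq a$ in $T$; without loss of generality $d(x) \leq d(y)$, so $d(y) \geq a \, d(x)$.

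The key idea is to build $S$ in a way that exploits this imbalance. First I would choose the vertex $v$ of maximum degree in $S$ (breaking ties by a fixed rule), giving at most $|T|$ choices — wait, that is the wrong bookkeeping; instead, mirroring Lemma~\ref{lem:simple_bound}, I would sum over the choice of $v$ as the max-degree vertex and bound the number of subtrees of each type. The point is that one of the $k-1$ growth steps — the one that attaches $x$ to $y$ (or $y$ to $x$) — has a better bound than $d(v)$. Concretely, grow $S$ from $v$ by adding leaves one at a time; since $e = xy$ is not a pendant edge of $S$, both $x$ and $y$ have further neighbours in $S$, so we may arrange the growth order so that at the moment we add the second of $\{x,y\}$ as a leaf, we attach it to the first. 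If we attach $x$ to $y$, there are at most $d(x) \leq d(v)/a$ choices for $x$ (since $x$'s degree in $T$ is at most $d(y)/a \le d(v)/a$, using $d(y) \le d(v)$); if instead we attach $y$ to $x$, we need $y$ to be a neighbour of $x$, and there are at most $d(x) \le d(v)/a$ such neighbours as well. Either way, that single step costs at most $d(v)/a$ instead of $d(v)$, while the remaining $k-2$ steps cost at most $d(v)$ each and the "where to attach" factors still multiply to at most $(k-1)!$. Hence the number of $a$-unbalanced $k$-vertex subtrees having $v$ as max-degree vertex is at most $\frac{(k-1)!}{a} d(v)^{k-1}$, and summing over $v$ gives the claim. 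Every $a$-unbalanced subtree is counted at least once since it has a max-degree vertex and at least one non-pendant edge of weight $\ge a$.

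The main obstacle is making the growth-order argument fully rigorous: one must verify that for any subtree $S$ with a designated non-pendant edge $xy$, there is a valid "leaf-peeling" order (equivalently, an ordering of $V(S)$ starting from $v$ in which every vertex after the first is adjacent in $S$ to an earlier vertex) in which $x$ and $y$ are consecutive with the later one attached to the earlier one — and crucially that when the later vertex is added it is being added as a leaf of the current partial subtree, so the $\le d(v)/a$ bound on the number of choices for that vertex's identity genuinely applies. This is where the hypothesis that $e$ is not a pendant edge of $S$ is used: it guarantees that removing the appropriate endpoint does not disconnect what remains or force $x, y$ apart. I would argue this by rooting $S$ at $v$ and peeling leaves from the "far" side first, or by a short explicit induction. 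A secondary, purely cosmetic point is the overcounting: a subtree may be $a$-unbalanced via several edges and have its max-degree vertex tied among several vertices, but since we only want an upper bound this costs nothing. The factor $(k-1)!$ (rather than something smaller) comes, exactly as in Lemma~\ref{lem:simple_bound}, from the at-most-$j$ choices at step $j$ for which earlier vertex to attach to.
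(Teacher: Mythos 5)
Your overall strategy matches the paper's: follow Lemma~\ref{lem:simple_bound}, grow $S$ from the max-degree vertex $v$ by adding leaves, and save a factor of $a$ by finding one growth step whose ``choice of new vertex'' factor is $\leq d(v)/a$ rather than $\leq d(v)$. The gap is in how you identify that cheap step, and in one of your two cases the bound you write down is wrong.

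You designate the cheap step to be the one at which the \emph{second} of $\{x,y\}$ is added (attached to the first). Rooting $S$ at $v$, the second of $x,y$ to appear is always the child and is attached to its parent. If $x$ is the parent of $y$, your bound is correct: the new vertex $y$ is attached to $x$, and the number of choices for $y$ is at most $d(x) \leq d(y)/a \leq d(v)/a$. But if $y$ is the parent of $x$, the new vertex is $x$ and it is attached to $y$; the number of choices for the new vertex at that step is bounded by $d(y)$, not by $d(x)$. Your justification ``since $x$'s degree in $T$ is at most $d(v)/a$'' is a fact about the vertex you end up picking, not about how many candidates there are: $y$ may well have $d(y)$ low-degree neighbours (think of $y$ as the centre of a large star), so the step costs up to $d(v)$, and the saving of $a$ evaporates in exactly this case.

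The fix, which is what the paper actually does, is to make the cheap step the one that attaches a vertex \emph{to} $x$, not the one that adds $x$. Because $xy$ is a non-pendant edge of $S$, the vertex $x$ is internal in $S$, hence has at least two neighbours in $S$; since $d(x) \leq d(v)/a < d(v)$ we also have $x \neq v$, so in $S$ rooted at $v$ the vertex $x$ has a parent and at least one child. In any leaf-addition build order from $v$, every vertex is attached to its parent, so the step that adds a child of $x$ is attached to $x$ and therefore has at most $d(x) \leq d(v)/a$ choices for the new vertex. That gives the cheap factor uniformly, covering both of your cases (in your ``$x$ before $y$'' case the child in question can be taken to be $y$ itself; in your ``$y$ before $x$'' case it is a different child of $x$, whose existence is exactly what the non-pendant hypothesis guarantees). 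With that correction the rest of your bookkeeping (the at-most-$j$ choices at step $j$ giving $(k-1)!$, the remaining $k-2$ degree factors bounded by $d(v)$, summing over $v$) goes through as you intended.
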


\begin{proof}
We can follow the proof of Lemma~\ref{lem:simple_bound}. The only change in the argument is that at least one vertex of degree at most $d(v)/a$ has to be added to the subtree at some point so as to include an edge of weight at least $a$. Since we also require the presence of such an edge that is not a pendant edge of the subtree, at some stage a neighbor of a vertex of degree at most $d(v)/a$ has to be added to the subtree as well, for which there are only at most $d(v)/a$ possibilities. This gives us the same inequality as in Lemma~\ref{lem:simple_bound}, but with an extra factor $a$ in the denominator.
\end{proof}

It remains to bound the number of $k$-vertex subtrees that are neither stars nor $a$-unbalanced; we denote this number by $\overline{Z}_k(T,a)$. Our next lemma provides a suitable bound:

\begin{lemma}\label{lem:balanced}
For every integer $k \geq 4$, every real number $a > 1$, and every tree $T$, we have
$$\overline{Z}_k(T,a) \leq 2(k-1)! a^{(k-2)^2} \sum_{v \in V(T)} d(v)^{k-2}.$$
\end{lemma}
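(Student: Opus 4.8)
The plan is to adapt the counting argument of Lemma~\ref{lem:simple_bound}, with one essential twist: instead of building up each relevant $k$-vertex subtree starting from a single vertex, one builds it up starting from a suitably chosen \emph{edge}. Growing from an edge uses one fewer ``add a leaf'' step, and since each such step is what costs a factor of (roughly) a vertex degree, this is exactly what replaces the exponent $k-1$ of Lemma~\ref{lem:simple_bound} by $k-2$. The balancedness assumption enters by guaranteeing that every vertex one ever attaches to has $T$-degree within a bounded factor of the degree of the starting edge, so that each of the $k-2$ steps costs at most $a^{k-3}d(v)$ rather than the overall maximum degree of $T$.

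More precisely, let $S$ be a $k$-vertex subtree of $T$ that is neither a star nor $a$-unbalanced, and let $S'$ be the subtree of $T$ induced by the non-leaf vertices of $S$. Since $S$ is not a star and $k\ge 3$, the tree $S'$ has at least two vertices, and since every tree has at least two leaves, $S'$ has at most $k-2$ vertices. The edges of $S'$ are precisely the non-pendant edges of $S$, so the hypothesis that $S$ is not $a$-unbalanced says exactly that every edge of $S'$ has weight less than $a$ in $T$. As $S'$ is connected on at most $k-2$ vertices, its diameter is at most $k-3$, and multiplying the weight inequality along a shortest path shows that $d(x)<a^{k-3}d(y)$ for any two vertices $x,y$ of $S'$.

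For the charging I would root $T$ at an arbitrary vertex. For $S$ as above, the vertex of $S'$ of smallest depth is unique and is an ancestor in $T$ of every vertex of $S'$; call it the \emph{top} of $S'$. Let $u=u(S)$ be the vertex of smallest label among those vertices of $S'$ that are not the top, and let $t$ be the parent of $u$ in $T$. Since the path in $S'$ from $u$ to the top has $t$ as its second vertex, we have $t\in S'$, so $tu$ is an edge of $S'$; in particular $u$ is not the root of $T$. Now fix a non-root vertex $u$ of $T$ and bound the number $N(u)$ of such subtrees $S$ with $u(S)=u$. For any of them, every vertex of $S'$ has $T$-degree less than $a^{k-3}d(u)=:\Delta$ by the previous paragraph, and $S$ is a $k$-vertex subtree of $T$ containing the edge $tu$, hence can be assembled from $\{t,u\}$ by adding one leaf at a time in $k-2$ steps. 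At each step the vertex one attaches to has at least two neighbours in $S$, hence is a non-leaf of $S$, hence lies in $S'$ and has $T$-degree less than $\Delta$; and at the $j$-th step there are at most $j+1$ vertices to attach to. Exactly as in Lemma~\ref{lem:simple_bound}, this yields
$$N(u)\le\Big(\prod_{j=1}^{k-2}(j+1)\Big)\Delta^{k-2}=(k-1)!\,a^{(k-3)(k-2)}d(u)^{k-2}.$$
Summing over all non-root vertices $u$ of $T$ and using $(k-3)(k-2)\le(k-2)^2$ gives
$$\overline{Z}_k(T,a)\le(k-1)!\,a^{(k-3)(k-2)}\sum_{v\in V(T)}d(v)^{k-2}\le 2(k-1)!\,a^{(k-2)^2}\sum_{v\in V(T)}d(v)^{k-2},$$
which is the claimed bound, with room to spare.

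The part I expect to require the most care is the design of the charging map: growing a subtree from a single vertex cannot do better than $d(v)^{k-1}$, so one must grow from an edge, and one must then assign that edge to a single vertex of $T$ in such a way that no vertex absorbs too many subtrees — which is precisely what the rooting accomplishes, each vertex being charged only through its unique parent edge. The accompanying technical point is to verify that balancedness controls \emph{all} of the attachment-point degrees simultaneously (via the diameter bound on $S'$), not merely the degrees along one particular light edge, so that only a single factor of $\Delta$, with its $a^{k-3}$ blow-up, is lost at each of the $k-2$ steps.
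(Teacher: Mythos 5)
Your proof is correct, and it takes a genuinely different route from the paper's. The paper's argument stratifies the edges of $T$ by a degree scale: it picks a non-pendant edge $e$ of $S$, finds a level $\ell$ with both endpoint degrees in $[a^\ell, a^{\ell+2})$, bounds the internal degrees of $S$ by $a^{\ell+k-2}$, counts the subtrees built from $e$, and then converts the resulting edge-indexed sum into a vertex-indexed sum via the observation that edges within a single degree band induce a forest. The overlap between adjacent bands is what produces the factor $2$ in the statement. Your argument instead roots $T$, uses the rooting (plus a vertex labelling to break ties) to assign to each qualifying subtree $S$ a canonical non-root vertex $u(S)\in S'$ together with the parent edge $tu\subseteq S'$, and then counts directly per vertex $u$. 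This charging cleanly avoids both the stratification and the forest step, and as you note it gives a slightly stronger bound, $(k-1)!\,a^{(k-3)(k-2)}\sum_v d(v)^{k-2}$, without the factor of $2$ and with a smaller power of $a$; the paper's tighter diameter bound (distance $\le k-4$ from a fixed endpoint of $e$ rather than your $\le k-3$ between arbitrary pairs) is available to you too and would shave another factor $a^{k-2}$ if desired. All the supporting claims check out: $S'$ is a connected induced subtree of $T$ on between $2$ and $k-2$ vertices whose edges are exactly the non-pendant edges of $S$; the minimum-depth vertex of a connected subgraph of a rooted tree is unique and ancestral to the rest; and in the leaf-by-leaf build-up every attachment vertex ends with degree $\ge 2$ in $S$ and hence lies in $S'$, so the degree cap $\Delta$ applies at every step.
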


\begin{proof}
Consider any edge $e$ whose weight is at most $a$. It is not difficult to see that there exists some nonnegative integer $\ell$ such that the degrees of both its endpoints lie in the interval $[a^\ell,a^{\ell+2})$: simply take $\ell$ in such a way that the smaller degree of the two lies in $[a^\ell,a^{\ell+1})$. Now consider any subtree $S$ that is not $a$-unbalanced and contains $e$ as a non-pendant edge (it automatically follows that $S$ is not a star). Every internal vertex $v$ of $S$ can be reached from $e$ by a path of non-pendant edges whose length is at most $k-4$. Since $S$ was assumed not to be $a$-unbalanced, none of these edges can have a weight greater than $a$, so the degree of $v$ in $T$ is at most $a^{\ell+2} \cdot a^{k-4} = a^{\ell+k-2}$.

Now we count all subtrees $S$ that are not $a$-unbalanced and contain $e$ as a non-pendant edge. Every such subtree can be obtained by repeatedly adding leaves, starting from $e$. This is done $k-2$ times. At the $j$-th step, we have a choice of $j+1$ vertices to attach a leaf to, and at most $a^{\ell+k-2}$ possible choices for the leaf by the observation on degrees of internal vertices in $S$. It follows that there are no more than
$$(k-1)! \cdot a^{(k-2)(\ell+k-2)}$$
such subtrees.

The number of edges whose ends both have degrees in $[a^\ell,a^{\ell+2})$ is less than the number of vertices whose degrees lie in this interval, since the edges induce a forest on the set of these vertices. Therefore, we obtain the following  upper bound for the number of $k$-vertex subtrees that are neither stars nor $a$-unbalanced (note that every non-star has at least one non-pendant edge):
\begin{align*}
\overline{Z}_k(T,a) &\leq \sum_{\ell \geq 0} \sum_{\substack{e = vw \in E(T) \\ d(v),d(w) \in [a^{\ell},a^{\ell+2})}} (k-1)! \cdot a^{(k-2)(\ell+k-2)} \\
&\leq \sum_{\ell \geq 0} \sum_{\substack{v \in V(T) \\ d(v) \in [a^{\ell},a^{\ell+2})}} (k-1)! \cdot a^{(k-2)(\ell+k-2)} \\
&\leq  \sum_{\ell \geq 0} \sum_{\substack{v \in V(T) \\ d(v) \in [a^{\ell},a^{\ell+2})}} (k-1)! \cdot d(v)^{k-2} \cdot a^{(k-2)^2} \\
&\leq 2(k-1)! a^{(k-2)^2} \sum_{v \in V(T)} d(v)^{k-2}.
\end{align*}
The last inequality holds since every vertex is counted at most twice in the double sum.
\end{proof}

Now we put everything together to obtain the desired implication (M2) $\Rightarrow$ (M4), completing the proof of Theorem~\ref{thm:main}. Let us formulate this explicitly:

\begin{cor}\label{cor:m2m4}
For every integer $k \geq 4$, the implication (M2) $\Rightarrow$ (M4) holds.
\end{cor}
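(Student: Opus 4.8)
The plan is to bound the proportion of $k$-vertex subtrees that are \emph{not} stars and to show that it tends to $0$ under the hypothesis (M2). For brevity write $m_j(T)=\sum_{v\in V(T)}d(v)^j$, and fix $k\ge 4$. I would start from the identity
$$1-p_2^{(k)}(T)=\frac{Z_k(T)-c(S_k,T)}{Z_k(T)}$$
and classify the non-star $k$-vertex subtrees $S$ of $T$. By the observation used in the proof of Lemma~\ref{lem:balanced}, every non-star has at least one non-pendant edge. If some non-pendant edge of $S$ has weight $\ge a$, then $S$ is $a$-unbalanced and hence counted by $Z_k(T,a)$; otherwise all non-pendant edges of $S$ have weight $<a$, so $S$ is neither a star nor $a$-unbalanced and is counted by $\overline Z_k(T,a)$. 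Thus $Z_k(T)-c(S_k,T)\le Z_k(T,a)+\overline Z_k(T,a)$ for every $a>1$, and since trivially $Z_k(T)\ge c(S_k,T)$ this yields
$$1-p_2^{(k)}(T)\le\frac{Z_k(T,a)+\overline Z_k(T,a)}{c(S_k,T)}.$$

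Next I would estimate the three quantities on the right: Lemma~\ref{lem:unbalanced} bounds $Z_k(T,a)$ by $\frac{(k-1)!}{a}m_{k-1}(T)$, Lemma~\ref{lem:balanced} bounds $\overline Z_k(T,a)$ by $2(k-1)!\,a^{(k-2)^2}m_{k-2}(T)$, and inequality~\eqref{eq:m2_and_m3} bounds $c(S_k,T)$ below by $(k-1)^{-(k-1)}m_{k-1}(T)-|T|$. Now restrict to a sequence $T_1,T_2,\ldots$ satisfying (M2); by Lemma~\ref{lem:m2_m3} it also satisfies (M3), so $m_{k-1}(T_n)/|T_n|\to\infty$, and in particular $c(S_k,T_n)\ge\tfrac12(k-1)^{-(k-1)}m_{k-1}(T_n)$ for all large $n$. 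Substituting, I obtain a bound of the form
$$1-p_2^{(k)}(T_n)\le C_k\Bigl(\frac1a+a^{(k-2)^2}\cdot\frac{m_{k-2}(T_n)}{m_{k-1}(T_n)}\Bigr)$$
with $C_k$ depending only on $k$, valid for all large $n$.

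It then remains to prove the auxiliary claim $m_{k-2}(T_n)/m_{k-1}(T_n)\to 0$. I would do this by splitting the vertices of $T_n$ at a degree threshold $D$: vertices of degree $\le D$ contribute at most $D^{k-2}|T_n|$ to $m_{k-2}(T_n)$, while for vertices of degree $>D$ one has $d(v)^{k-2}\le d(v)^{k-1}/D$, so they contribute at most $m_{k-1}(T_n)/D$. Hence
$$\frac{m_{k-2}(T_n)}{m_{k-1}(T_n)}\le\frac{D^{k-2}|T_n|}{m_{k-1}(T_n)}+\frac1D,$$
and since $|T_n|/m_{k-1}(T_n)\to 0$ by (M3), the right-hand side can be made arbitrarily small by first fixing $D$ large and then taking $n$ large. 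Combining the estimates in the correct order finishes the proof: given $\varepsilon>0$, first choose $a$ with $C_k/a<\varepsilon/2$; then, with $a$ frozen, use the claim to choose $n$ so large that $C_k a^{(k-2)^2}m_{k-2}(T_n)/m_{k-1}(T_n)<\varepsilon/2$. For such $n$ we get $1-p_2^{(k)}(T_n)<\varepsilon$, which is (M4).

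The delicate point — and what I would flag as the main obstacle — is the opposing behavior of the two bounds in the parameter $a$: Lemma~\ref{lem:unbalanced} wants $a$ large, but the factor $a^{(k-2)^2}$ in Lemma~\ref{lem:balanced} then blows up, so no single choice of $a$ balances them. The way around it is to decouple the parameters, freezing $a$ first to control the unbalanced subtrees and only afterward invoking the superlinear growth of the $(k-1)$st degree moment to absorb the (now $a$-dependent but otherwise bounded) contribution of the balanced non-stars. The sole ingredient not already contained in the earlier lemmas is the estimate $m_{k-2}/m_{k-1}\to 0$, which, as sketched, is short.
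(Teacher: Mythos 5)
Your decomposition of non-stars into $a$-unbalanced subtrees (Lemma~\ref{lem:unbalanced}) and balanced non-stars (Lemma~\ref{lem:balanced}) is exactly the one in the paper, and your argument is correct, but the way you combine the two bounds differs from the paper's in both of its ingredients. The paper uses H\"older's inequality to control $\sum_v d(v)^{k-2}$ by $|T|^{1/(k-1)}\bigl(\sum_v d(v)^{k-1}\bigr)^{(k-2)/(k-1)}$ and then chooses $a=a(n)$ explicitly---namely $a = (c(S_k,T_n)/|T_n|)^{1/((k-1)((k-2)^2+1))}$---so that the two terms balance, which yields a quantitative rate
$Z_k(T_n)-c(S_k,T_n) = O\bigl((|T_n|/c(S_k,T_n))^{1/((k-1)((k-2)^2+1))}c(S_k,T_n)\bigr)$.
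You instead prove the qualitative estimate $m_{k-2}(T_n)/m_{k-1}(T_n)\to 0$ by a Markov-style degree-threshold argument (no H\"older) and then use a two-stage $\varepsilon$-argument, freezing $a$ large first and only afterwards sending $n\to\infty$; this makes the observation in your last paragraph (that no \emph{fixed} $a$ balances the two bounds) moot, since the paper's $a$ grows with $n$. Your route is more elementary and arguably cleaner if one only wants the convergence (M4); the paper's route is what one needs to also extract the explicit error term used later in Corollary~\ref{cor:lower_bound_mix} and related quantitative statements. Both are correct.
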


\begin{proof}
Assume that condition (M2) is satisfied. Combining it with inequality~\eqref{eq:m2_and_m3} from the proof of Lemma~\ref{lem:m2_m3}, we see that
$$\frac{1}{c(S_k,T_n)} \sum_{v \in V(T_n)} d(v)^{k-1}$$
is bounded above by a positive constant (for sufficiently large $n$).

We combine Lemma~\ref{lem:unbalanced} and~Lemma \ref{lem:balanced} to find that the total number of $k$-vertex subtrees of a tree $T$ that are not stars can be bounded by
$$Z_k(T,a) + \overline{Z}_k(T,a) \leq \frac{(k-1)!}{a} \sum_{v \in V(T)} d(v)^{k-1} + 2(k-1)! a^{(k-2)^2} \sum_{v \in V(T)} d(v)^{k-2}.$$
H\"older's inequality gives us
$$\sum_{v \in V(T)} d(v)^{k-2} \leq |T|^{1/(k-1)} \bigg( \sum_{v \in V(T)} d(v)^{k-1} \bigg)^{(k-2)/(k-1)},$$
so putting everything together, we obtain
\begin{align*}
Z_k(T_n) - c(S_k,T_n) &= Z_k(T_n,a) + \overline{Z}_k(T_n,a) \\
&= O \Bigg( a^{-1} \sum_{v \in V(T_n)} d(v)^{k-1} + a^{(k-2)^2} \sum_{v \in V(T_n)} d(v)^{k-2} \Bigg) \\
&= O \Bigg( a^{-1} \sum_{v \in V(T_n)} d(v)^{k-1} + a^{(k-2)^2} |T_n|^{1/(k-1)} \bigg( \sum_{v \in V(T_n)} d(v)^{k-1} \bigg)^{(k-2)/(k-1)} \Bigg) \\
&= O \Big( a^{-1} c(S_k,T_n) + a^{(k-2)^2} |T_n|^{1/(k-1)} c(S_k,T_n)^{(k-2)/(k-1)} \Big).
\end{align*}
The $O$-constant depends on $k$ and the specific sequence of trees, but notably not on $a$, which we can still choose freely. Taking
$$a = \Big( \frac{c(S_k,T_n)}{|T_n|} \Big)^{\frac{1}{(k-1)((k-2)^2+1)}},$$
which is greater than $1$ for sufficiently large $n$ in view of condition (M2), the two terms in the estimate balance, and we end up with
$$Z_k(T_n) - c(S_k,T_n) = O \Big( \Big( \frac{|T_n|}{c(S_k,T_n)} \Big)^{\frac{1}{(k-1)((k-2)^2+1)}} c(S_k,T_n) \Big),$$
so that~(M2) now implies
$$\lim_{n \to \infty} \frac{c(S_k,T_n)}{Z_k(T_n)} = 1,$$
which is exactly (M4).
\end{proof}

As we have now shown the implications (M1) $\Rightarrow$ (M2) (Corollary~\ref{cor_m1m2}), (M2) $\Leftrightarrow$ (M3) (Lemma~\ref{lem:m2_m3}) and (M2) $\Rightarrow$ (M4) (Corollary~\ref{cor:m2m4}) and the implication (M4) $\Rightarrow$ (M1) is trivial, this also completes the proof of Theorem~\ref{thm:main}.

Our ideas can also be used to re-prove a result of Bubeck and Linial \cite[Theorem 2]{bubeck2015local}, even with a slightly improved constant: namely, they showed that
$$\liminf_{n \to \infty} \Big( p_1^{(k)}(T_n) +  p_2^{(k)}(T_n) \Big) \geq \frac{1}{2k^{2k}N_k}$$
for any sequence $T_1,T_2,\ldots$ of trees with $|T_n| \to \infty$, where $N_k$ is the number of nonisomorphic trees with $k$ vertices.

Making use of the arguments used to prove Theorem~\ref{thm:main}, we obtain the following:

\begin{cor}\label{cor:lower_bound_mix}
For every sequence $T_1,T_2,\ldots$ of trees with $|T_n| \to \infty$, we have
$$\liminf_{n \to \infty} p_1^{(k)}(T_n) +  p_2^{(k)}(T_n) \geq \frac{1}{2(k-1)^{k-1}(k-1)!}.$$
\end{cor}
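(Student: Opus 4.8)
The plan is to play the crude star count off against the path count of Lemma~\ref{lem:many_paths}, after splitting the sequence according to whether the diameter stays bounded or not. The two ingredients I would rely on are the following. First, combining Lemma~\ref{lem:simple_bound} in the form $\sum_{v} d(v)^{k-1} \ge Z_k(T)/(k-1)!$ with inequality~\eqref{eq:m2_and_m3}, one obtains, for every tree $T$ with $|T|\ge k$,
$$c(S_k,T)\ \ge\ \frac{1}{(k-1)^{k-1}}\sum_{v\in V(T)}d(v)^{k-1}-|T|\ \ge\ \frac{Z_k(T)}{(k-1)^{k-1}(k-1)!}-|T|.$$
Second, Lemma~\ref{lem:many_paths} gives $c(P_k,T)\ge |T|/2$ as soon as $\operatorname{diam}(T)\ge 2k-2$. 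Write $\gamma=\frac1{2(k-1)^{k-1}(k-1)!}$ for the target constant. Note that the ``error term'' $-|T|$ in the star estimate is harmless exactly when $Z_k(T)$ grows faster than $|T|$, and that this is precisely when the path bound is \emph{not} needed; this complementarity is the whole point.

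First I would partition the indices into $A=\{n:\operatorname{diam}(T_n)\ge 2k-2\}$ and $B=\{n:\operatorname{diam}(T_n)\le 2k-3\}$. Since $\liminf_{n\to\infty}\bigl(p_1^{(k)}(T_n)+p_2^{(k)}(T_n)\bigr)$ is bounded below by the smaller of the two corresponding $\liminf$'s taken over $A$ and over $B$ (an index set that happens to be finite may simply be discarded), it suffices to bound each of these two quantities below by $\gamma$.

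For the indices in $B$ the diameters are bounded by the constant $2k-3$, so --- assuming $B$ is infinite --- Lemma~\ref{lem:bounded_diam} applies along $B$ and yields $Z_k(T_n)/|T_n|\to\infty$; plugging this into the star estimate above gives $\liminf_{n\in B}p_2^{(k)}(T_n)\ge \frac1{(k-1)^{k-1}(k-1)!}=2\gamma$, which is more than enough. For the indices in $A$ I would set $m_n=\frac{Z_k(T_n)}{(k-1)^{k-1}(k-1)!}$ and combine the two estimates to get, for all $n\in A$ with $|T_n|\ge k$,
$$c(P_k,T_n)+c(S_k,T_n)\ \ge\ \frac{|T_n|}{2}+\max\bigl(0,\,m_n-|T_n|\bigr).$$
A short two-case check --- $|T_n|\ge m_n$ (where the right-hand side is at least $|T_n|/2\ge m_n/2$) versus $|T_n|<m_n$ (where it equals $m_n-|T_n|/2>m_n/2$) --- shows the right-hand side is always at least $m_n/2$, hence $p_1^{(k)}(T_n)+p_2^{(k)}(T_n)\ge\gamma$ for every such $n$. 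Taking the minimum over the two parts finishes the argument.

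I do not foresee a genuine obstacle; the points needing care are organizational. One is keeping the $\liminf$ accounting correct under the split into $A$ and $B$, including the trivial remark that if one of the sets is finite it contributes nothing. The other --- and this is the step that actually pins down the constant --- is the two-case distinction inside $A$ between ``$Z_k(T_n)$ grows faster than linearly in $|T_n|$'' (where the star bound alone suffices) and ``$Z_k(T_n)$ is at most linear'' (where the factor $1/2$ from Lemma~\ref{lem:many_paths} and the path count take over); it is precisely the interplay of these two regimes that produces $\gamma$ with no extra loss. I would also check in passing that every cited result applies in the range used, which it does since $k\ge 4$: Lemmas~\ref{lem:stars},~\ref{lem:bounded_diam} and~\ref{lem:m2_m3} require only $k\ge 3$, while Lemmas~\ref{lem:simple_bound} and~\ref{lem:many_paths} hold for every positive integer $k$.
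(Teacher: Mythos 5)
Your proof is correct and follows essentially the same route as the paper: lower-bound the star count via Lemma~\ref{lem:simple_bound} together with inequality~\eqref{eq:m2_and_m3}, bound the path count via Lemma~\ref{lem:many_paths}, and dispose of the bounded-diameter subsequence with Lemma~\ref{lem:bounded_diam}. The only (cosmetic) difference is that the paper folds the step $|T_n|\le 2c(P_k,T_n)$ directly into the chain $Z_k\le (k-1)!(k-1)^{k-1}(c(S_k,\cdot)+2c(P_k,\cdot))$ and then observes $(x+y)/(x+2y)\ge 1/2$, whereas you make the diameter split into $A$ and $B$ explicit and run a clean two-case comparison of $|T_n|$ against $m_n$; both yield the constant $\frac{1}{2(k-1)^{k-1}(k-1)!}$ in the same way.
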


\begin{proof}
Lemma~\ref{lem:simple_bound} gives us
$$Z_k(T_n) \leq (k-1)! \sum_{v \in V(T_n)} d(v)^{k-1}.$$
Combining this inequality with~\eqref{eq:m2_and_m3} and Lemma~\ref{lem:many_paths} (we may assume that the diameter is not bounded in view of Lemma~\ref{lem:bounded_diam}) yields
$$Z_k(T_n) \leq (k-1)! (k-1)^{k-1} (c(S_k,T_n) + |T_n|) \leq (k-1)! (k-1)^{k-1} (c(S_k,T_n) + 2c(P_k,T_n)).$$
Therefore,
\begin{align*}
p_1^{(k)}(T_n) +  p_2^{(k)}(T_n) &= \frac{c(S_k,T_n) + c(P_k,T_n)}{Z_k(T_n)} \\
&\geq \frac{c(S_k,T_n) + c(P_k,T_n)}{(k-1)! (k-1)^{k-1} (c(S_k,T_n) + 2c(P_k,T_n))},
\end{align*}
and the desired result follows immediately.

\end{proof}

With more careful estimates, it is certainly possible to improve further on the lower bound in Corollary~\ref{cor:lower_bound_mix}.

\section{Subtrees of different sizes}

So far, we were only comparing subtrees of the same fixed size $k$. However, it is natural to assume that $\lim_{n \to \infty} p_1^{(k)}(T_n) = 0$ for some $k$ (in words: the proportion of paths among $k$-vertex subtrees goes to $0$) should also imply $\lim_{n \to \infty} p_2^{(\ell)}(T_n) = 1$ (the proportion of stars among $\ell$-vertex subtrees goes to $1$) for some $\ell$ that is not necessarily equal to $k$. Indeed this is true if $k \leq \ell$: since we trivially have
$$\sum_{v \in V(T)} d(v)^{k-1} \leq \sum_{v \in V(T)} d(v)^{\ell-1}$$
in this case, condition (M3) is satisfied for $\ell$ if it is satisfied for $k$. Therefore, we immediately obtain a slight extension of Theorem~\ref{thm:main}:

\begin{theo}
Let $T_1,T_2,\ldots$ be a sequence of trees such that $|T_n| \to \infty$ as $n \to \infty$. Let $k,\ell$ be integers such that $\ell \geq k \geq 4$, and assume that one of the following equivalent statements holds:
\begin{enumerate}
\item[(M1)${}_k$] $\displaystyle \lim_{n \to \infty} p_1^{(k)}(T_n) = 0$,
\item[(M2)${}_k$] $\displaystyle \lim_{n \to \infty} \frac1{|T_n|} Z_k(T_n) = \infty$,
\item[(M3)${}_k$] $\displaystyle \lim_{n \to \infty} \frac1{|T_n|} \sum_{v \in V(T_n)} d(v)^{k-1} = \infty$,
\item[(M4)${}_k$] $\displaystyle \lim_{n \to \infty} p_2^{(k)}(T_n) = 1$.
\end{enumerate}
In this case, the following statements hold as well:
\begin{enumerate}
\item[(M1)${}_\ell$] $\displaystyle \lim_{n \to \infty} p_1^{(\ell)}(T_n) = 0$,
\item[(M2)${}_\ell$] $\displaystyle \lim_{n \to \infty} \frac1{|T_n|} Z_{\ell}(T_n) = \infty$,
\item[(M3)${}_\ell$] $\displaystyle \lim_{n \to \infty} \frac1{|T_n|} \sum_{v \in V(T_n)} d(v)^{\ell-1} = \infty$,
\item[(M4)${}_\ell$] $\displaystyle \lim_{n \to \infty} p_2^{(\ell)}(T_n) = 1$.
\end{enumerate}
\end{theo}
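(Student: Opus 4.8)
The plan is to route the whole argument through condition (M3), which is the most convenient of the four to manipulate directly, and to use that the degree moments are monotone in the exponent. First I would apply Theorem~\ref{thm:main} with parameter $k$ (legitimate since $k \ge 4$): by the equivalence proved there, the assumption that one of (M1)${}_k$, (M2)${}_k$, (M3)${}_k$, (M4)${}_k$ holds already forces all four of them, and in particular
\[
\lim_{n \to \infty} \frac1{|T_n|} \sum_{v \in V(T_n)} d(v)^{k-1} = \infty .
\]

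Next I would pass from the $k$-th to the $\ell$-th degree moment. Every vertex of a tree on at least two vertices has degree at least $1$, so $d(v)^{k-1} \le d(v)^{\ell-1}$ as soon as $\ell \ge k$; summing over all vertices and dividing by $|T_n|$ gives
\[
\frac1{|T_n|} \sum_{v \in V(T_n)} d(v)^{k-1} \le \frac1{|T_n|} \sum_{v \in V(T_n)} d(v)^{\ell-1}
\]
for every $n$ with $|T_n| \ge 2$, hence for all sufficiently large $n$ since $|T_n| \to \infty$. Letting $n \to \infty$, we conclude that (M3)${}_k$ implies (M3)${}_\ell$. Finally I would invoke Theorem~\ref{thm:main} once more, this time with parameter $\ell$ (legitimate since $\ell \ge k \ge 4$): from (M3)${}_\ell$ the equivalence in that theorem delivers (M1)${}_\ell$, (M2)${}_\ell$, (M3)${}_\ell$ and (M4)${}_\ell$ simultaneously, which is the asserted conclusion.

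I do not expect any genuine obstacle here; the content is a one-line monotonicity estimate sandwiched between two applications of Theorem~\ref{thm:main}. The only point worth flagging is that the implication is truly one-directional: it relies on $\ell \ge k$ through $d(v)^{k-1} \le d(v)^{\ell-1}$, and the reverse passage (from larger subtrees to smaller ones) genuinely fails. For instance, a spider with a single centre of degree about $|T_n|^{1/(k-1)}$ and that many legs, each a path of length about $|T_n|^{(k-2)/(k-1)}$, has $\frac1{|T_n|}\sum_{v} d(v)^{k-1}$ bounded (so (M3)${}_k$, and hence (M1)${}_k$, fails), while $\frac1{|T_n|}\sum_{v} d(v)^{\ell-1} \to \infty$ for every $\ell > k$ (so (M1)${}_\ell$ holds). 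Thus no analogue of this theorem can hold with the roles of $k$ and $\ell$ interchanged.
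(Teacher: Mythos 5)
Your proposal is correct and matches the paper's argument exactly: both reduce to (M3) via Theorem~\ref{thm:main}, use the trivial monotonicity $d(v)^{k-1}\le d(v)^{\ell-1}$ (valid since all degrees are $\ge 1$) to pass from the $k$th to the $\ell$th moment, and then invoke Theorem~\ref{thm:main} again for $\ell$. Your closing remark about failure of the converse also parallels the paper's extended-star example (Figure~\ref{fig:ext_star}); the exponent you choose for the central degree differs slightly, but the construction and the computation are the same in spirit.
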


In heuristic terms: if most $k$-vertex subtrees are stars, then this is also the case for $\ell$-vertex subtrees, provided $\ell \geq k$. On the other hand, if only very few of the $k$-vertex subtrees are paths, then the same applies to $\ell$-vertex subtrees for every $\ell \geq k$. It is noteworthy, however, that the converse is not true, and counterexamples are very easy to construct.

Consider for instance a family of extended stars constructed as follows (Figure~\ref{fig:ext_star}): $T_n$ has $n$ vertices, of which the central vertex has degree (approximately) $n^{2/(2k-1)}$ for some $k \geq 4$, while all other vertices have degree $1$ or $2$. The actual lengths of the paths around the central vertex are irrelevant. It is easy to see in this example that (M3)${}_k$ is not satisfied, and that in fact $\lim_{n \to \infty} p_2^{(k)}(T_n) = 0$, while on the other hand (M3)${}_{k+1}$ is satisfied, so that $\lim_{n \to \infty} p_2^{(k+1)}(T_n) = 1$.

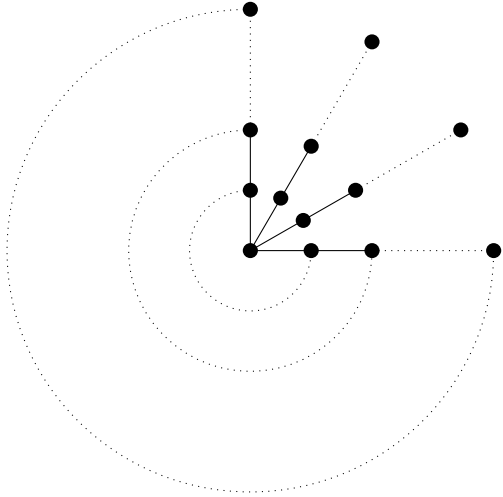
\begin{figure}[htbp]
\begin{center}
\begin{tikzpicture}[scale=0.8]

        \node[fill=black,circle,inner sep=2pt]  at (0,0) {};
        \node[fill=black,circle,inner sep=2pt]  at (0,1) {};
        \node[fill=black,circle,inner sep=2pt]  at (0,2) {};
        \node[fill=black,circle,inner sep=2pt]  at (0,4) {};
        \node[fill=black,circle,inner sep=2pt]  at (0.5,0.87) {};
        \node[fill=black,circle,inner sep=2pt]  at (1,1.73) {};
        \node[fill=black,circle,inner sep=2pt]  at (2,3.46) {};
        \node[fill=black,circle,inner sep=2pt]  at (0.87,0.5) {};
        \node[fill=black,circle,inner sep=2pt]  at (1.73,1) {};
        \node[fill=black,circle,inner sep=2pt]  at (3.46,2) {};
        \node[fill=black,circle,inner sep=2pt]  at (1,0) {};
        \node[fill=black,circle,inner sep=2pt]  at (2,0) {};
        \node[fill=black,circle,inner sep=2pt]  at (4,0) {};

	\draw (0,0)--(0,2);
	\draw [dotted] (0,2)--(0,4);
	\draw (0,0)--(1,1.73);
	\draw [dotted] (1,1.73)--(2,3.46);
	\draw (0,0)--(1.73,1);
	\draw [dotted] (1.73,1)--(3.46,2);
	\draw (0,0)--(2,0);
	\draw [dotted] (2,0)--(4,0);

	\draw [dotted] (0,1) arc (90:360:1) ;
	\draw [dotted] (0,2) arc (90:360:2) ;
	\draw [dotted] (0,4) arc (90:360:4) ;

\end{tikzpicture}
\end{center}
\caption{An extended star.}\label{fig:ext_star}
\end{figure}


\end{document}